\documentclass[reqno]{amsart}

\usepackage{graphicx,subfigure}


\usepackage[latin1]{inputenc}
\usepackage[english]{babel}

\usepackage{amsmath,amsthm,amsfonts,latexsym,amssymb}
\usepackage[colorlinks]{hyperref}
\hypersetup{linkcolor=blue,citecolor=blue,filecolor=black,urlcolor=blue}
\usepackage{comment}

\usepackage{color,amsthm,amsfonts}
\definecolor{darkgreen}{rgb}{0,0.7,0.1}

{ \theoremstyle{plain}
\newtheorem{theorem}{Theorem}
\newtheorem{proposition}[theorem]{Proposition}
\newtheorem{lemma}[theorem]{Lemma}
\newtheorem{corollary}[theorem]{Corollary}
  \theoremstyle{remark}

  \theoremstyle{definition}

}



\begin{document}
\subjclass[2010]{35J92, 35J20, 35B09, 35B45.}

\keywords{Singular $p$-Laplacian equations, Neumann boundary conditions, asymptotics of radial solutions.}

\title[]{Asymptotics for a high-energy solution of a supercritical problem}

\author[F. Colasuonno]{Francesca Colasuonno}
\address{Francesca Colasuonno\newline\indent
Dipartimento di Matematica
\newline\indent
Alma Mater Studiorum Universit\`a di Bologna
\newline\indent
piazza di Porta San Donato 5, 40126 Bologna, Italy}
\email{francesca.colasuonno@unibo.it}

\author[B. Noris]{Benedetta Noris}

\address{Benedetta Noris \newline\indent
Dipartimento di Matematica\newline\indent
Politecnico di Milano\newline\indent
Piazza Leonardo da Vinci 32, 20133 Milano, Italy}
\email{benedetta.noris@polimi.it}


\begin{abstract} 
In this paper we deal with the equation
\[-\Delta_p u+|u|^{p-2}u=|u|^{q-2}u\]
for $1<p<2$ and $q>p$, under Neumann boundary conditions in the unit ball of $\mathbb R^N$.
We focus on the three positive, radial, and radially non-decreasing solutions, whose existence for $q$ large is proved in \cite{BFGM}. 
We detect the limit profile as $q\to\infty$ of the higher energy solution and show that, unlike the minimal energy one, it converges to the constant $1$.
The proof requires several tools borrowed from the theory of minimization problems and accurate a priori estimates of the solutions, which are of independent interest. 
\end{abstract}

\maketitle

\section{Introduction}
Let us consider the following Neumann problem
\begin{equation}\label{eq:Pq}
\begin{cases}
-\Delta_p u+u^{p-1}=u^{q-1}\quad&\mbox{in }B,\\
u>0\quad&\mbox{in }B,\\
\partial_\nu u=0\quad&\mbox{on }\partial B,
\end{cases}
\end{equation}
where $B$ is a ball of $\mathbb R^N$ ($N\ge1$), $\nu$ is the outer unit normal of $\partial B$ and $1<p<q$.  
In the case $p=2$, this system is the stationary version of the Keller-Segel model for chemotaxis and for this reason it has been well studied starting from the `80. Clearly it admits the spatially homogeneous solution $1$ for every value of the exponent $q$ and in any ball $B$; the interest is in finding spatially inhomogeneous steady states for the Keler-Segel system, i.e. non-constant solutions of \eqref{eq:Pq}. The existence of non-constant solutions is subject to hypoteses on the radius of the domain $B$: for sufficiently small radii the constant solution is the only solution of the problem, while for sufficiently large radii there exists a non-constant solution, see \cite{LNT} for the Sobolev-subcritical case, and \cite{LN} for the supercritical regime. In the critical case the situation is more involved and the validity of similar existence/non-existence results strongly depends on the dimension $N$, cf. \cite{AY91,AY97,BKP}. We recall in passing that for the first time in \cite{AY97} the quasilinear problem with $p\neq 2$ is studied proving a non-existence result in the critical case.

Recently it has been proved that, even restricting one's attention to the search for radial solutions, this problem presents multiplicity and the structure of the set of radial solutions can be very rich, depending on the values of the parameters into play. On the other hand, very little is still known in the non-radial setting; for $p = 2$ we refer to \cite{CM,dPPV} for the existence of solutions in non-radial domains, and to \cite{BCNW} for the existence of a non-radial solution of the Dirichlet problem in an annulus.

For the moment let us focus on radial solutions that present a local minimum at the origin; the complementary case is more difficult to deal with, as will be clarified in the sequel.
In the case $p=2$, it has been proved in \cite{BGT} via bifurcation techniques that, as $q$ increases, each time that $q$ crosses the value $2+\lambda_{i+1}^{\text{rad}}$, a new non-constant radial solution of \eqref{eq:Pq} appears, where $\lambda_{i+1}^{\text{rad}}$ is the $i$-th non-zero eigenvalue of $-\Delta$ with Neumann boundary conditions in the ball, with $i\geq1$. These solutions can be distinguished upon the number of oscillations around the constant solution $1$, meaning that, for every integer $i\geq1$ there exists a solution that intersects $i$-times the constant $1$.
In \cite{ABF-ESAIM} we have studied the problem for every $p>1$. By means of shooting methods, we have proved the existence of infinitely many radial solutions  in the range $2<p<q$. As in the case $p=2$, these solutions can be distinguished upon the number of oscillations around $1$. In the same paper we also showed that a new phenomenon appears when $1<p<2$: solutions with the same number of oscillations appear in couple, i.e. for every integer $i\geq1$ there exist two distinct radial solutions that cross $i$-times the constant $1$. We mention that the last existence result requires that the domain of the equation is a ball with sufficiently large radius.  We address the interested reader to \cite{BGT,ABF-ESAIM} for a detailed comparison among the various cases and for numerical simulations.
 Concerning radial oscillating solutions that have a local maximum at the origin, their existence has been proved in  \cite{ABF-PRE} in the Sobolev-subcritical setting for any $p>1$ and in \cite{BGT} for $p=2$ and values of $q$ close to the bifurcation parameter $2+\lambda_{i+1}^{\text{rad}}$. These solutions may present the phenomenon of explosion at the origin in the Sobolev-supercritical regime, see \cite{CF}.

Since the analysis in the present paper is independent of the radius of the ball, from now on we let the domain $B$ be the unit ball of $\mathbb R^N$. We shall focus here on radial solutions that, in addition to having a local minimum at the origin,  are also monotone in the radial variable $r$ for every $r\in(0,1)$.  This additional property allows to characterize them variationally, as we shall now illustrate.  Following \cite{SerraTilli2011}, we define the cone
\begin{equation}\label{cone}
\mathcal C :=\Big\{u\in W^{1,p}_{\mathrm{rad}}(B)\,:\, u\ge0,\,u(r_1)\le u(r_2) \mbox{ for all }0<r_1\le r_2\le1\Big\},
\end{equation}
where, with abuse of notation, $u(|x|):=u(x)$.
As proved in \cite[Proposition 2.6]{secchi2012increasing} (see also \cite{BNW,BF,SerraTilli2011}), $\mathcal C $ is embedded in $L^\infty(B)$, i.e. there exists a constant $C(N,p)>0$ such that
\begin{equation}\label{eq:CNp}
\|u\|_{L^\infty(B)}\le C(N,p)\|u\|_{W^{1,p}(B)}\quad\mbox{for all }
u\in\mathcal{C} .
\end{equation}
Even more, the solutions of the problem that belong to the cone are a priori bounded in the $C^1$- norm, see \cite[Lemma 2.2]{BFGM}. This allows to define a truncated nonlinearity $f_q$ (see \eqref{eq:phi_s0} ahead) that coincides with $u^{q-1}$ up to the a priori bound and is Sobolev-subcritical at infinity. As a consequence, all solutions of the corresponding modified problem \eqref{eq:f_tilde_q} belonging to the cone $\mathcal C$ also solve the original problem \eqref{eq:Pq}. In view of this fact, it is possible to associate to problem \eqref{eq:Pq} \emph{inside the cone $\mathcal C$} an energy functional $I_q$ and hence to define a variational structure. We shall recall in Section \ref{Sec:variational_setting} the variational setting introduced in the previously mentioned articles to find solutions of \eqref{eq:Pq} in $\mathcal C$.

In the present paper we restrict our attention to the case
\[
1< p < 2.
\]
In this range,  we proved in  \cite[Theorems 1.1 and 1.3]{BFGM} the existence of two distinct solutions $u_q$ and $v_q$ of \eqref{eq:Pq}, for every $q$ sufficiently large. More precisely, $u_q$ and $v_q$ are both radial, non-negative, radially non-decreasing and can be distinguished upon their energy: $I_q(u_q)<I_q(1)<I_q(v_q)$.
As both $u_q$ and $v_q$ present a minimum at the origin and intersect the constant solution $1$ exactly once, this is coherent with the multiplicity result \cite{ABF-ESAIM} described above, obtained with shooting techniques. 
Beyond the mere existence, the variational approach allows to describe the nature of the solutions as critical points of $I_q$, for $q$ sufficiently large. Indeed, $u_q$ is a global minimizer of $I_q$ restricted to the set $\mathcal N_q$, which is the intersection of the Nehari manifold with the cone $\mathcal C$, see \eqref{eq:N_q_def} ahead. Contrarily to what happens in the regime $p\geq2$, the constant solution $1$ is a local (but not global) minimizer of the energy functional restricted to the same Nehari-type set $\mathcal N_q$ (see \cite[Theorem 1.2]{BFGM}). The presence of two minimizers on $\mathcal N_q$ justifies the existence of a third critical point $v_q$ of $I_q$, which is of minimax-type on $\mathcal N_q$.

The minimax characterization just described also allows us in \cite[Theorem 1.1]{BFGM} to have an insight on the limit profile of the lower-energy solutions $u_q$ as $q\to\infty$.
More precisely, as $q\to\infty$,
\begin{equation}\label{eq:conv-uq}
u_q  \to G  \textrm{ in } W^{1,p}(B) \cap C^{0,\nu}(\bar B) \textrm{ for any }\nu\in(0,1),
\end{equation}
where $G$ is the unique positive solution of the following $(p-1)$-homogeneous equation coupled with Dirichlet 1 boundary conditions
\begin{equation}\label{eqforG}
\begin{cases}-\Delta_p G+G^{p-1}=0\quad&\mbox{in }B,\\
G=1&\mbox{on }\partial B.
\end{cases}
\end{equation}
This asymptotic analysis plays a fundamental role in \cite{BFGM} to show that the lower-energy solution is not constant for $q$ large. Indeed, the limit problem \eqref{eqforG} does not admit any constant solution, hence by the convergence \eqref{eq:conv-uq}, $u_q\not\equiv 1$ for $q$ large. 
Moreover, in the case $p=2$, an analogous asymptotic estimate allows to prove in \cite{BGNT} the non-degeneracy of $u_q$ and its uniqueness as global minimizer of $I_q$ on $\mathcal N_q$, which are the starting steps for the subsequent construction of oscillating radial solutions.

The importance of the asymptotic analysis stimulated us to investigate the limit behavior of the higher-energy solutions $v_q$: this is the aim of the present paper, whose main result is stated in the following theorem.

\begin{theorem}\label{thm:vqto1}
Let $p\in (1,2)$ be fixed and let $v_q\in\mathcal C$ be a solution of  \eqref{eq:Pq} having energy higher than the constant $1$, namely
\begin{equation}\label{eq:higher_energy}
\int_B \left(\frac{|\nabla v_q|^p}{p}+\frac{|v_q|^p}{p}-\frac{|v_q|^q}{q} \right)dx > |B|\left( \frac{1}{p}-\frac{1}{q}\right).
\end{equation}
Then, as $q\to\infty$, 
\[v _q\to 1\quad \mbox{in }W^{1,p}(B)\cap C^{0,\nu}(\bar B)\] 
for every $\nu\in (0,1)$. 
\end{theorem}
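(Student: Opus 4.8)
The plan is to argue that the higher-energy solutions $v_q$ stay uniformly bounded (in $q$) in $\mathcal C$, extract a limit along a subsequence, identify the limit as a solution of a $q\to\infty$ limit problem in the cone, and show that the only such limit compatible with the high-energy constraint \eqref{eq:higher_energy} is the constant $1$. First I would recall from \cite[Lemma 2.2]{BFGM} that solutions of \eqref{eq:Pq} in $\mathcal C$ are a priori bounded in $C^1(\bar B)$; more precisely I expect the a priori bound on $\|v_q\|_{L^\infty(B)}$ to stay bounded as $q\to\infty$, which is the crucial quantitative input. Combined with the embedding \eqref{eq:CNp} this gives a uniform $W^{1,p}$-bound, hence (up to a subsequence) $v_q\rightharpoonup v_\infty$ weakly in $W^{1,p}(B)$, strongly in $C^{0,\nu}(\bar B)$ for $\nu<1$ via compact embedding in the cone, and $v_\infty\in\mathcal C$ with $0\le v_\infty\le 1$ pointwise (the bound $v_\infty\le 1$ coming from the fact that, on the set where $v_q>1+\delta$, the term $v_q^{q-1}$ blows up, which is incompatible with the $C^1$-bound — this is the standard ``concentration'' heuristic and will need to be made rigorous by testing the equation).

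Next I would pass to the limit in the equation. The key is to understand the behavior of $\int_B v_q^{q}\,dx$ and $\int_B v_q^{q-1}\varphi\,dx$ as $q\to\infty$. Since $v_q\le 1$ (asymptotically) and $v_q\to v_\infty$ uniformly, on any region where $v_\infty<1$ we have $v_q^{q-1}\to 0$; the nonlinearity can only survive near $\partial B$ where $v_q$ approaches $1$. Testing \eqref{eq:Pq} with $\varphi\equiv 1$ gives $\int_B v_q^{p-1}=\int_B v_q^{q-1}$, so $\int_B v_q^{q-1}\,dx$ is uniformly bounded and in fact converges to $\int_B v_\infty^{p-1}\,dx=|B\cap\{v_\infty=1\}|\cdot 1 + (\text{contribution from }v_\infty<1)$ — wait, more carefully, $\int_B v_q^{p-1}\to\int_B v_\infty^{p-1}$ by uniform convergence, so $\lim\int_B v_q^{q-1}=\int_B v_\infty^{p-1}$. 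Then I would test with a general radial $\varphi$ and show that the limit measure $\mu:=\lim v_q^{q-1}\,dx$ is supported on $\{v_\infty=1\}$ (since elsewhere the integrand vanishes uniformly). The limit equation for $v_\infty$ then reads $-\Delta_p v_\infty + v_\infty^{p-1}=\mu$ weakly in the cone, with $\mu\ge 0$ supported where $v_\infty=1$, $0\le v_\infty\le 1$, and $v_\infty$ radially nondecreasing.

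The final and decisive step is to use the energy inequality \eqref{eq:higher_energy} to rule out $v_\infty\not\equiv 1$. I would first show, using $\int_B v_q^{q-1}=\int_B v_q^{p-1}$ to eliminate one power, that the energy of $v_q$ can be rewritten as $\frac1p\int_B(|\nabla v_q|^p+v_q^p) - \frac1q\int_B v_q^q$, and that $\frac1q\int_B v_q^q\to 0$ (since $\int_B v_q^q$ is uniformly bounded: $v_q\le 1+o(1)$ and $|B|$ is finite, actually $\int_B v_q^q\le\int_B v_q\cdot(1+o(1))^{q-1}$ — one must be a little careful, but the boundary layer where $v_q$ is close to $1$ contributes $O(1)$, so $\frac1q\int_B v_q^q\to 0$). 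Hence $I_q(v_q)\to\frac1p\int_B(|\nabla v_\infty|^p+v_\infty^p)$ by weak lower semicontinuity and — here the subtle point — actually \emph{strong} $W^{1,p}$ convergence, which I would get by testing the equation for $v_q$ with $v_q$ itself and comparing with testing the limit equation with $v_\infty$. On the other hand $I_q(1)=|B|(\frac1p-\frac1q)\to\frac{|B|}{p}$, so \eqref{eq:higher_energy} forces $\frac1p\int_B(|\nabla v_\infty|^p+v_\infty^p)\ge\frac{|B|}{p}$, i.e. $\int_B(|\nabla v_\infty|^p+v_\infty^p)\ge|B|$. But the constant $1$ uniquely minimizes $w\mapsto\int_B(|\nabla w|^p+w^p)$ among $w\in\mathcal C$ with $0\le w\le 1$ subject to the constraint encoded in the limit equation — more directly, since $0\le v_\infty\le 1$ we have $\int_B v_\infty^p\le|B|$ with equality iff $v_\infty\equiv 1$, and $\int_B|\nabla v_\infty|^p\ge 0$; combining with a suitable Poincaré-type or direct argument (testing the limit equation with $v_\infty-1\le 0$, noting the right-hand side $\mu(v_\infty-1)=0$) forces $\int_B|\nabla v_\infty|^p + \int_B v_\infty^{p-1}(v_\infty-1)\le 0$, hence $\nabla v_\infty\equiv 0$ and $v_\infty\equiv 1$. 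Finally, since the limit is $1$ independently of the subsequence, the full family converges. The main obstacle I anticipate is making rigorous the claim that $v_q\le 1+o(1)$ and that $\frac1q\int_B v_q^q\to 0$ with the right uniformity in $q$ — this is exactly where the ``accurate a priori estimates'' advertised in the abstract enter, and it is genuinely the technical heart of the argument; everything downstream is soft functional analysis.
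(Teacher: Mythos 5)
Your overall architecture --- extract a limit profile $v_\infty$, let the nonlinearity concentrate as a nonnegative measure $\mu=\lim v_q^{q-1}\,dx$ supported on $\{v_\infty=1\}$, and close by comparing $\liminf_q I_q(v_q)\ge |B|/p$ (from \eqref{eq:higher_energy}) with the energy identity of the limit equation --- is genuinely different from the paper's, which instead argues by contradiction, identifies a limit \emph{Dirichlet} problem \eqref{eq:limit-pb} on $B_{\bar R}=\{v_\infty<1\}$, and exploits the strict minimality of its unique solution. Your route could in principle be shorter, but as written it has a real gap exactly where you flag ``the subtle point''. To write $-\Delta_p v_\infty+v_\infty^{p-1}=\mu$ you must pass to the limit in $\int_B|\nabla v_q|^{p-2}\nabla v_q\cdot\nabla\varphi\,dx$, and weak $W^{1,p}$-convergence does not let you identify the weak-$*$ limit of $|\nabla v_q|^{p-2}\nabla v_q$ with $|\nabla v_\infty|^{p-2}\nabla v_\infty$: for a quasilinear operator this requires strong (or a.e.) convergence of the gradients. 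Your proposed source of strong convergence is ``testing the limit equation with $v_\infty$'', i.e.\ it presupposes the limit equation --- the argument is circular. The circularity can be broken by a Minty--Browder monotonicity argument: since $\|v_q-v_\infty\|_{L^\infty(B)}\to0$ and $\int_B v_q^{q-1}dx=\int_B v_q^{p-1}dx$ is bounded, one gets $\langle -\Delta_p v_q+v_q^{p-1},\,v_q-v_\infty\rangle=\int_B v_q^{q-1}(v_q-v_\infty)\,dx\to0$, which yields both the limit equation and strong $W^{1,p}$-convergence; but this step is absent from your proposal and is not ``soft functional analysis''. The paper's substitute is the ODE-based local $C^1$-convergence of Lemma \ref{lem:conv-v'q}, which is only available on compact subsets of $\{v_\infty<1\}$ and is precisely why the paper works under a contradiction hypothesis.

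The second problem is the closing step. Testing your limit equation with $v_\infty-1\le0$ gives the \emph{identity} $\int_B|\nabla v_\infty|^p\,dx+\int_B v_\infty^{p-1}(v_\infty-1)\,dx=0$; since the second integral is $\le0$, this does \emph{not} force $\nabla v_\infty\equiv0$ --- the non-constant candidate obtained by gluing the solution of \eqref{eq:limit-pb} on $B_{\bar R}$ with the constant $1$ outside satisfies exactly this identity. What does work is to rewrite it as $\int_B(|\nabla v_\infty|^p+v_\infty^p)\,dx=\int_B v_\infty^{p-1}\,dx\le|B|$, with equality if and only if $v_\infty\equiv1$ (using $0\le v_\infty\le1$), and to combine this with the lower bound $\int_B(|\nabla v_\infty|^p+v_\infty^p)\,dx\ge|B|$ coming from \eqref{eq:higher_energy} once strong convergence is in place. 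With that correction and the Minty step supplied, your argument closes. Two smaller remarks: the uniform estimates you defer ($v_q\le1+o(1)$ and $\frac1q\int_Bv_q^q\to0$) are exactly the content of Lemma \ref{commonbound} and Corollaries \ref{cor:improved-est}--\ref{weakconv}, obtained via a phase-plane argument and the convexity characterization of Lemma \ref{lem:conseq-def-equiv}; and $\mu$ may charge $\partial B$, so the support statement must be formulated on $\bar B$ (harmless here, since $v_\infty(1)=1$).
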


As already mentioned, the existence of a high-energy solution $v_q$ as in Theorem \ref{thm:vqto1}, for $q$ sufficiently large, is proved in \cite[Theorem 1.3]{BFGM}.
We shall recall in Section \ref{Sec:variational_setting} the precise variational characterization of $v_q$, although we do not need it explicitly for the proof of Theorem \ref{thm:vqto1}.

We highlight that the result proved in Theorem \ref{thm:vqto1} is consistent with the numerical simulations in \cite[Fig.2]{ABF-ESAIM} and \cite[Fig.1]{BFGM}.  It is worth mentioning that,  even though Theorem \ref{thm:vqto1} applies to any solution in $\mathcal{C}$ having energy higher than the energy of the constant $1$, according to our numerical simulations there exists at most one of such solutions, thus being the solution $v_q$ described in Section \ref{Sec:variational_setting}.

In order to highlight the main difficulties in the proof of Theorem \ref{thm:vqto1}, let us compare it with the proof of \eqref{eq:conv-uq} in \cite{BFGM} (see also \cite{GrossiNoris} for the case $p=2$).  As already mentioned, all solutions of \eqref{eq:Pq} in $\mathcal C$ are a priori bounded in the  $C^1$-norm, this allows, for both $u_q$ and $v_q$, to get the existence of a limit profile and some of its properties, i.e. monotonicity and value 1 at the boundary of the ball. Since problem \eqref{eq:Pq} has multiple solutions in $\mathcal C$, the difficulty is to distinguish the limit profile of the higher-energy solutions $v_q$ from that of $u_q$: the energy levels have to be taken into account.   

As for $u_q$, since it is a gobal minimizer of $I_q$ on $\mathcal N_q$, it is natural to look for a limit minimization problem. We show in \cite{BFGM} that the suitable limit problem is the following 
\[
\inf\left\{\frac{1}{p}\|u\|^p_{W^{1,p}(B)}\,:\,u\in W^{1,p}(B), \,u(1)=1\right\},\]
that has a unique solution $G$, which also solves \eqref{eqforG}. 
Indeed, the term $\frac{1}{q}\int_B u_q^q\, dx$ in the energy functional $I_q$ is infinitesimal as $q\to\infty$ thanks to the a priori bounds on $u_q$.
Notice that the boundary conditions have changed in the limit problem; this is reminiscent of the fact that the limit problem presents indeed a singularity on the boundary of $B$.
To clarify this point,  we refer to the asymptotic analysis for one-peak and multi-peak solutions to the semilinear problem, namely for $p=2$, performed in \cite{BGNT}. As shown therein, the limit configuration satisfies an equation involving a one-dimensional Dirac delta, see \cite[equation (2.5)]{BGNT}. When dealing with non-decreasing solutions, as in the present paper, we can interpret the singularity as lying on the boundary of the domain and thus affecting the boundary conditions.

Concerning the analysis of the asymptotics for $v_q$, as these functions are not minimizers over $\mathcal N_q$, a major difficulty is that it is not clear how to detect a variational problem in the limit. 
This fact prevents us from taking advantage, in the proof, of the properties of the limit problem, such as uniqueness or shape of the solutions, thus making the analysis more involved than in the case of $u_q$.
Let us remark that both $u_q$ and $v_q$ intersect the constant $1$ exactly once, at possibly different points, and that the intersection point may vary with $q$. For radii smaller than the intersection point, if the solutions are strictly smaller than 1 uniformly in $q$, the right hand side of the equation in \eqref{eq:Pq} would vanish as $q\to\infty$; contrarily, for radii larger than the intersection point, if the solutions are strictly larger than 1 uniformly in $q$, the right hand side of the equation in \eqref{eq:Pq} would explode as $q\to\infty$; finally, in a neighbourhood of the intersection point, the right hand side of the equation is an indeterminate form $1^\infty$ as $q\to\infty$. These heuristic considerations suggest that very little about the limit configuration can be inferred from the equation itself.
The proof of Theorem \ref{thm:vqto1} should instead pass through an analysis of the energy levels.

Here is how we proceed. In Proposition \ref{prop:conv-en}, we detect the limit of the energy levels of $v_q$ thanks to a significant refinement of the $W^{1,p}$-bounds. The key tool for improving such bounds is the equivalent definition \eqref{eq:def-sol-equiv} of weak solutions, which is given in terms of a distributional inequality and is borrowed from the field of minimization problems.  
We obtain that the limit function $v_\infty$ of $(v_q)$ has the same energy as the constant function $1$. 
Nonetheless, this is not sufficient to conclude, mainly due to the fact that our a priori estimates are not strong enough to provide the $W^{1,p}$-convergence. This differs from other asymptotic studies, compare for example our Lemma \ref{commonbound} with \cite[equation (3.1)]{BMP}. 
In order to overcome this difficulty and conclude the proof, we argue by contradiction and suppose that $v_\infty$ is strictly below 1 in a ball $B_{\bar R}$, with $\bar R<1$. Working in the absurd hypothesis has the twofold advantage of having a limit minimization problem of the form \eqref{eqforG} in the smaller ball $B_{\bar R}$ and of allowing us to strengthen the convergence up to $C^1$ in any compact subset of $B_{\bar R}$, see Lemma \ref{lem:conv-v'q} ahead. Both ingredients are crucial to conclude that, in the absurd setting, the energy of the limit function is strictly less than the limit of the energies of $v_q$, thus providing the desired contradiction.

The paper is organized as follows. In Section \ref{Sec:variational_setting} we introduce the variational setting and the equivalent definition of weak solution for problem \eqref{eq:Pq}. We further prove therein a priori estimates for non-decreasing radial solutions and a general weak convergence result, of independent interest, holding when varying the parameter $q$. Finally, in Section \ref{Sec:main} we prove Theorem \ref{thm:vqto1} using the technique explained above.

\section{Preliminary results}\label{Sec:variational_setting}
\subsection{Variational setting}
Let us describe the variational setting introduced in \cite{BNW,BF,BFGM} to find solutions of \eqref{eq:Pq} in $\mathcal C$. A first rough $L^\infty$-estimate on solutions of \eqref{eq:Pq} belonging to $\mathcal C$ is $\|u\|_{L^\infty(B)}\le 1+(p')^{1/p}=:K_\infty$, where $p'$ is the conjugate exponent of $p$ (cf. \cite[Lemma 2.2]{BFGM}). In \cite{BFGM} we introduce the modified nonlinearity
\begin{equation}\label{eq:phi_s0}
f_q(s):=\begin{cases}s^{q-1}\quad&\mbox{if }s\in[0,s_0],\\
s_0^{q-1}+\frac{q-1}{\ell-1}s_0^{q-\ell}(s^{\ell-1}-s_0^{\ell-1})&\mbox{if } s\in(s_0,\infty),\end{cases}
\end{equation}
with $s_0:=\max \left\{ 2+(p')^{1/p},  C(N,p)(1+|B|^{1/p}) \right\}$, $C(N,p)$ is the constant introduced in \eqref{eq:CNp}, $\ell\in (p,p^*)$, and $p^*$ the critical Sobolev exponent. In view of the $L^\infty$-estimate, being $s_0>K_\infty$, it holds that every solution of the modified problem
\begin{equation}\label{eq:f_tilde_q}
\begin{cases}
-\Delta_p u+ u^{p-1}={f}_q(u)\quad&\mbox{in }B,\\
u>0&\mbox{in }B,\\
\partial_\nu u=0&\mbox{on }\partial B,
\end{cases}
\end{equation}
belonging to $\mathcal C $ also solves the original problem \eqref{eq:Pq}. 
Hence, when looking for solutions in $\mathcal C$, it is possible to associate to equation \eqref{eq:Pq} an energy functional that is well defined in $W^{1,p}(B)$, namely
\begin{equation*}
I_q (u):= \int_B\left(\frac{|\nabla u|^p}{p}+\frac{|u|^p}{p}-F_q(u) \right)dx,
\end{equation*}
where $F_q(u):=\int_0^u f_q(s)ds$.  Although $I_q $ is not the standard energy functional associated to \eqref{eq:Pq}, it has the property that its critical points belonging to the cone $\mathcal C $ are weak solutions of \eqref{eq:Pq}. This allows to investigate the existence of solutions to \eqref{eq:Pq} via variational methods inside $\mathcal C $.

In the above mentioned papers, the following Nehari-type set inside $\mathcal C $ is defined
\begin{equation}\label{eq:N_q_def}
\mathcal N_q :=\left\{u\in\mathcal C  \setminus\{0\}\,: 
\int_B(|\nabla u|^p+|u|^p)dx=\int_B f_q(u)u\,dx\right\}.
\end{equation}
As already mentioned in the Introduction, it is proved therein the existence of a non-constant non-decreasing radial solution $u_q $ of \eqref{eq:Pq} that achieves the critical level
\begin{equation}\label{eq:c_q_p_def}
\inf_{u\in \mathcal N_q } I_q (u),
\end{equation}
provided that $q>p$ when $p>2$, $q>2+\lambda_2^{\text{rad}}$ for $p=2$ (recall that $\lambda_2^{\text{rad}}$ is the first non-zero eigenvalue of the Laplacian in the ball $B$ under Neumann boundary conditions), and $q$ sufficiently large for $1<p<2$. 

In the following let $1<p<2$. In this case, in \cite[Theorem 1.2]{BFGM}, we prove the following estimate for every $w\in \mathcal{N}_q $ with the property $\|w-1\|_{W^{1,p}(B)}\leq\delta$:
\begin{equation}\label{eq:ordine-energie}
I_q (w)-I_q (1)\ge M\|w-1\|_{W^{1,p}(B)}^p
\end{equation}
for $q$ sufficiently large and some constants $\delta \in (0,1)$ and $M>0$.
This means that $1$ is a local minimizer for $I_q\big|_{\mathcal N_q}$ and allows to prove, for $q$ sufficiently large, the existence of a second non-constant solution $v_q\in\mathcal C$ which is of  mountain pass type over $\mathcal N_q $, cf. \cite[Theorem 1.3]{BFGM}. In particular, $v_q$ achieves the energy level
\begin{equation}\label{eq:d_q}
\inf_{\gamma\in\Upsilon_q } \max_{(t,s)\in Q} I_q (\gamma(t,s)),
\end{equation}
where $\Upsilon_q :=\{\gamma \in C(Q;\mathcal C) \,:\, \gamma=\gamma_0 \text{ on } \partial Q \}$, $Q:=[R_1,R_2]\times [0,1] \subset \mathbb{R}^2$, $0<R_1 \ll 1$, $R_2 \gg1$, and $\gamma_0(t,s):=t(su_q+1-s)\in\mathcal C$ for every $(t,s)\in Q$. 
Furthermore, the energy levels are ordered as follows
\begin{equation}\label{eq:cf-energie}
I_q(u_q)  <I_q (1) < I_q(v_q) .
\end{equation}
The first inequality above is a consequence of the convergence in \eqref{eq:conv-uq}; the second one descends from \eqref{eq:ordine-energie} and implies that $v_q \not\equiv 1$ for $q$ sufficiently large. 

\subsection{Refined a priori estimates}
In the proof of Theorem \ref{thm:vqto1}, refined a priori estimates on $v_q$ and on its derivative are crucial.  Notice that, by standard elliptic regularity, every solution of the problem is $C^1(\bar B)$.

In the next lemma, thanks to a phase plane analysis, we refine, with respect to \cite[Lemma 2.2]{BFGM}, the $C^1$-a priori estimates for solutions of \eqref{eq:Pq} belonging to $\mathcal C$. 
Throughout the paper, for radial functions we use alternatively $u(x)$ and $\nabla u(x)$, with $x\in B$, or $u(r)$ and $u'(r)$, with $r=|x|\in(0,1)$, with abuse of notation. 

\begin{lemma}\label{commonbound}
Let $u\in \mathcal C $ be a solution of \eqref{eq:Pq}. For every $r\in [0,1]$ it holds 
\[
u(r) \leq \left(\frac{q}{p}\right)^\frac{1}{q-p} 
\quad\textrm{ and }\quad
u'(r) \leq \left(\frac{q-p}{q(p-1)}\right)^\frac{1}{p}.
\]
\end{lemma}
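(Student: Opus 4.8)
The plan is to run a phase‑plane (Hamiltonian) argument for the radial ODE. Writing the equation in radial coordinates and using that $u$ is non‑decreasing, so that $|u'|^{p-2}u'=(u')^{p-1}$, the solution satisfies
\[
\big(r^{N-1}(u')^{p-1}\big)'=r^{N-1}\big(u^{p-1}-u^{q-1}\big),\qquad r\in(0,1),
\]
together with $u'(0)=u'(1)=0$ (the second being the Neumann condition) and $u\in C^1([0,1])$ by elliptic regularity. I would then introduce the energy
\[
H(r):=\frac{p-1}{p}\,(u'(r))^p+\frac{u(r)^q}{q}-\frac{u(r)^p}{p},
\]
and check, using the equation, that for $r\in(0,1)$
\[
H'(r)=u'(r)\Big[(p-1)(u')^{p-2}u''+u^{q-1}-u^{p-1}\Big]=-\frac{N-1}{r}\,(u'(r))^p\le0 .
\]
Hence $H$ is non‑increasing on $(0,1)$, and by continuity on $[0,1]$; in particular $H(r)\le H(0)=g(u(0))$ for every $r\in[0,1]$, where I set $g(t):=\frac{t^q}{q}-\frac{t^p}{p}$.

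The second ingredient is the elementary behaviour of $g$ on $[0,\infty)$: since $g'(t)=t^{p-1}(t^{q-p}-1)$, the function $g$ is non‑increasing on $[0,1]$ and increasing on $[1,\infty)$, with $g(0)=0$ and $\min_{t\ge0}g=g(1)=\frac1q-\frac1p<0$. I would also record the preliminary fact that $u(0)\le1$: integrating the equation over $B$ and using the Neumann condition gives $\int_B u^{q-1}\,dx=\int_B u^{p-1}\,dx$, which is impossible if the non‑decreasing function $u$ satisfied $u(0)>1$, since then $u^{q-1}>u^{p-1}$ pointwise in $B$. In particular $g(u(0))\le g(0)=0$.

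Then I conclude as follows. Evaluating $H$ at $r=1$, where $u'(1)=0$, the monotonicity $H(1)\le H(0)$ reads $g(u(1))\le g(u(0))\le0$, i.e. $u(1)^{q-p}\le q/p$; since $u$ is non‑decreasing, this gives $u(r)\le u(1)\le(q/p)^{1/(q-p)}$ for all $r$, the first estimate. For the gradient, from $H(r)\le H(0)$ I get
\[
\frac{p-1}{p}\,(u'(r))^p\le g(u(0))-g(u(r))\le 0-\min_{t\ge0}g(t)=\frac1p-\frac1q=\frac{q-p}{pq},
\]
so $(u'(r))^p\le\frac{q-p}{q(p-1)}$, which is the second estimate.

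The computation of $H'$ and its sign is the heart of the argument; the only points requiring a little care are the behaviour at $r=0$ — handled by the $C^1$ regularity of $u$ together with the continuity of $H$ up to the origin (and, if one wants to avoid second derivatives, by deducing the monotonicity of $H$ directly from the first‑order equation) — and the preliminary remark $u(0)\le1$, which the integral identity settles at once. I do not expect any serious obstacle beyond these.
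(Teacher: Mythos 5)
Your proof is correct and follows essentially the same route as the paper: the same energy function $H=L_q$, the same monotonicity $H'=-\frac{N-1}{r}(u')^p\le 0$, and the same preliminary observation $u(0)\le 1$ obtained by integrating the equation over $B$. The only (cosmetic) difference is that you extract the two bounds by evaluating at $r=1$ and by minimizing $g$, whereas the paper phrases the conclusion as membership of $(u,u')$ in a phase-plane region; the mathematics is identical.
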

\begin{proof}
We follow the reasoning in the proof of \cite[Lemma 5.5]{BF}.
By integrating the equation satisfied by $u$, we get
\[
\int_B u^{p-1}(1-u^{q-p}) dx =0.
\]
If $u\equiv 1$, the statement is clearly verified. Otherwise, since $u\not\equiv1$ is positive and non-decreasing, we deduce that
\begin{equation}\label{eq:u(0)}
u(0)<1 \quad\textrm{ and }\quad u(1)>1 .
\end{equation}

Consider the equation satisfied by $u$ in radial form. We multiply it by $u'\ge 0$ to obtain 
\[
\left( \frac{p-1}{p} (u')^p +\frac{u^q}{q} - \frac{u^p}{p} \right)'
=-\frac{N-1}{r} (u')^p.
\]
We deduce that the function
\begin{equation}
L _q(r):=\frac{p-1}{p} (u'(r))^p - \frac{u(r)^p}{p} +\frac{u(r)^q}{q},\quad r\in[0,1]
\end{equation}
is non-increasing in $r$, and hence, using \eqref{eq:u(0)},
\[
L _q(r)\leq L _q(0) = -\frac{u(0)^p}{p}+\frac{u(0)^q}{q} \leq 0\quad\mbox{for all } r\in[0,1].
\]
We note that $L _q(r)\le 0$ is equivalent to 
\[
(u(r), u'(r))\in\Sigma:=\left\{(x,y)\in\mathbb R^2\,:\,x\ge0,\, 0\le y\le \left[\frac{p}{p-1}\left(\frac{x^p}p-\frac{x^q}q\right)\right]^{1/p} \right\},
\]
which implies the statement.
\end{proof}

As an immediate consequence of Lemma \ref{commonbound}, we infer the following $W^{1,p}$-estimate for solutions of \eqref{eq:Pq} belonging to $\mathcal{C}$:
\begin{equation}\label{eq:first_improvement}
\|u\|_{W^{1,p}(B)}^p \leq 
|B|\left(\frac{q-p}{q(p-1)}+\left(\frac{q}{p}\right)^{\frac{p}{q-p}}\right).
\end{equation}
Notice that the right hand side of the previous inequality converges to 
$|B| p/(p-1)$ as $q\to\infty$, so that such bound is uniform in $q$.
As this is too rough for our purposes, we shall now introduce an important tool that allows us to improve the $W^{1,p}$-estimate. 
It is an equivalent definition of weak solutions of \eqref{eq:Pq} belonging to $\mathcal{C}$; we believe that this characterization is interesting in itself. 

\begin{lemma}\label{lem:conseq-def-equiv}
A function $u\in \mathcal C$ is a weak solution of \eqref{eq:Pq} if and only if for every $\varphi\in W^{1,p}(B)$
\begin{equation}\label{eq:def-sol-equiv}
\int_B\frac{|\nabla u|^p+u^p}{p}\, dx\le \int_B\frac{|\nabla \varphi|^p+|\varphi|^p}{p}\,dx - \int_B u^{q-1}(\varphi-u)\, dx.  
\end{equation}
\end{lemma}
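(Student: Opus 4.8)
The plan is to prove the two implications of this equivalence separately, exploiting the convexity of the $p$-Dirichlet-plus-$L^p$ energy. Write $J(u):=\frac1p\int_B(|\nabla u|^p+|u|^p)\,dx$; this is a convex $C^1$ functional on $W^{1,p}(B)$ with Gateaux derivative $\langle J'(u),\psi\rangle=\int_B(|\nabla u|^{p-2}\nabla u\cdot\nabla\psi+|u|^{p-2}u\,\psi)\,dx$. The key structural observation is that $u\in\mathcal C$ being a weak solution of \eqref{eq:Pq} means exactly $\langle J'(u),\psi\rangle=\int_B u^{q-1}\psi\,dx$ for all $\psi\in W^{1,p}(B)$ (here I use that a solution in $\mathcal C$ is positive and $C^1(\bar B)$, so $u^{q-1}$ is the genuine right-hand side and testing against $W^{1,p}$ is legitimate).

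For the forward implication, assume $u$ is a weak solution. By convexity of $J$, for any $\varphi\in W^{1,p}(B)$ we have $J(\varphi)\ge J(u)+\langle J'(u),\varphi-u\rangle$. Taking $\psi=\varphi-u\in W^{1,p}(B)$ in the weak formulation gives $\langle J'(u),\varphi-u\rangle=\int_B u^{q-1}(\varphi-u)\,dx$, and substituting yields $J(\varphi)\ge J(u)+\int_B u^{q-1}(\varphi-u)\,dx$, which is precisely \eqref{eq:def-sol-equiv} after rearranging. This direction is short and uses only the subgradient inequality for convex functionals.

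For the reverse implication, assume \eqref{eq:def-sol-equiv} holds for all $\varphi\in W^{1,p}(B)$. Fix an arbitrary $\psi\in W^{1,p}(B)$ and for $t>0$ set $\varphi=u+t\psi$. Since $u\in\mathcal C$ is bounded (so $u^{q-1}\in L^\infty\subset L^{p'}$) and $J$ is differentiable, \eqref{eq:def-sol-equiv} gives $0\le J(u+t\psi)-J(u)-t\int_B u^{q-1}\psi\,dx$; dividing by $t$ and letting $t\to0^+$ yields $0\le\langle J'(u),\psi\rangle-\int_B u^{q-1}\psi\,dx$. Replacing $\psi$ by $-\psi$ gives the reverse inequality, hence $\langle J'(u),\psi\rangle=\int_B u^{q-1}\psi\,dx$ for every $\psi\in W^{1,p}(B)$, i.e. $u$ is a weak solution of \eqref{eq:Pq}.

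The only genuinely delicate point is making sure the right-hand side term $\int_B u^{q-1}(\varphi-u)\,dx$ is well defined and finite for all competitors $\varphi\in W^{1,p}(B)$: this is where the a priori $L^\infty$-bound on $u\in\mathcal C$ (Lemma \ref{commonbound}, or the cruder \cite[Lemma 2.2]{BFGM}) is used, together with the embedding $W^{1,p}(B)\hookrightarrow L^1(B)$, so that $u^{q-1}(\varphi-u)\in L^1(B)$. A secondary point is the differentiability of $t\mapsto J(u+t\psi)$ at $t=0$, which follows from dominated convergence since $|\nabla u|^{p-2}\nabla u\cdot\nabla\psi\in L^1$ and $|u|^{p-2}u\psi\in L^1$; one could alternatively avoid differentiation entirely and invoke that for a convex function $J$, the inequality $J(\varphi)\ge J(u)+\langle\xi,\varphi-u\rangle$ holding for all $\varphi$ with a fixed continuous linear functional $\xi$ forces $\xi=J'(u)$ (since $J$ is Gateaux differentiable its subdifferential is the singleton $\{J'(u)\}$). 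I expect the convexity/subdifferential bookkeeping to be entirely routine; the ``borrowed from minimization problems'' flavour is exactly this identification of a weak solution with a point satisfying a one-sided variational inequality against the convex part of the energy.
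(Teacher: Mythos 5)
Your proof is correct and follows essentially the same route as the paper: the paper simply packages the argument by introducing the convex functional $\mathcal E_u(\varphi)=\frac1p\int_B(|\nabla\varphi|^p+|\varphi|^p)\,dx-\int_B u^{q-1}\varphi\,dx$ and noting that \eqref{eq:def-sol-equiv} is exactly $\mathcal E_u(u)\le\mathcal E_u(\varphi)$, so the equivalence is the statement that critical points of a convex $C^1$ functional coincide with its global minimizers --- which is precisely your subgradient inequality in one direction and your first-order condition (difference quotient in $t$, then $\psi\mapsto-\psi$) in the other. No gaps.
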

\begin{proof}
For $u\in \mathcal C$, let $\mathcal E_u : W^{1,p}(B) \to \mathbb R$ be defined as
\[
\mathcal E_u(\varphi):=\int_B\frac{|\nabla \varphi|^p+|\varphi|^p}{p} \, dx-\int_B u^{q-1}\varphi \,dx.
\]
Notice that a function $u\in \mathcal C$ is a weak solution of \eqref{eq:Pq} if and only if it is a critical point of $\mathcal E_u$. 
If $u$ satisfies \eqref{eq:def-sol-equiv}, $u$ is a global minimizer  and so a critical point of $\mathcal E_u$.
As for the other implication, we observe that $\mathcal E_u$ is the sum of $\frac{1}{p}\|\varphi\|^p_{W^{1,p}(B)}$ and a linear term, hence it is convex. Therefore, if $u$ is a critical point of $\mathcal E_u$, it is a global minimizer, and so \eqref{eq:def-sol-equiv} holds
for every $\varphi\in W^{1,p}(B)$, thus proving the statement.
\end{proof}

\begin{corollary}\label{cor:improved-est}
Let $u\in \mathcal{C}$ be a solution of \eqref{eq:Pq}, then
\[
I_q(u) \leq \frac{\|u\|_{W^{1,p}(B)}^p}{p} \leq 
\frac{|B|}{p} \left(\frac{q}{p}\right)^{\frac{p}{q-p}}.
\]
\end{corollary}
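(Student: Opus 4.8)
The plan is to derive both inequalities essentially for free from the tools already at hand. For the first one, I would start from the rough $L^\infty$-bound $\|u\|_{L^\infty(B)}\le K_\infty<s_0$ valid for any solution $u\in\mathcal C$ of \eqref{eq:Pq}: on the range of $u$ the truncated nonlinearity coincides with the pure power, so that $f_q(s)=s^{q-1}$ there and hence $F_q(u)=u^q/q\ge 0$ pointwise. Since the primitive appearing in $I_q$ is nonnegative, the definition of the functional gives at once
\[
I_q(u)=\int_B\Big(\frac{|\nabla u|^p}{p}+\frac{|u|^p}{p}-F_q(u)\Big)dx\le\int_B\frac{|\nabla u|^p+|u|^p}{p}\,dx=\frac{\|u\|_{W^{1,p}(B)}^p}{p}.
\]

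For the second inequality, the key idea is to feed into the variational characterization of Lemma \ref{lem:conseq-def-equiv} the test function that matches the refined a priori bound of Lemma \ref{commonbound}. Concretely, I would take the constant competitor $\varphi\equiv c$ with $c:=(q/p)^{1/(q-p)}$ in \eqref{eq:def-sol-equiv}; since $\nabla\varphi\equiv 0$, this reads
\[
\frac{\|u\|_{W^{1,p}(B)}^p}{p}\le\frac{|B|}{p}\Big(\frac{q}{p}\Big)^{\frac{p}{q-p}}-\int_B u^{q-1}(c-u)\,dx.
\]
By Lemma \ref{commonbound} we have $u(r)\le (q/p)^{1/(q-p)}=c$ for every $r\in[0,1]$, so $u^{q-1}(c-u)\ge 0$ on $B$ and the last integral can simply be dropped, which yields the claimed estimate. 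Combining this with the previous display proves the corollary.

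I do not expect any genuine obstacle here; the one point worth flagging is that the a priori estimate of Lemma \ref{commonbound} is tailored precisely so that the constant $c=(q/p)^{1/(q-p)}$ is an admissible function lying \emph{above} $u$, which is exactly what makes the remaining integral have a favourable sign. Any strictly smaller constant would fail to dominate $u$ in general and would not close the argument, while a larger one would overshoot the target bound, so the choice of test function is the only mildly delicate ingredient.
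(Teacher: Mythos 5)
Your argument is correct and coincides with the paper's own proof: the first inequality follows from $F_q(u)\ge 0$ and the definition of $I_q$, and the second from testing \eqref{eq:def-sol-equiv} with the constant $\varphi\equiv (q/p)^{1/(q-p)}$ and using Lemma \ref{commonbound} to discard the nonpositive term $-\int_B u^{q-1}(\varphi-u)\,dx$. No further comment is needed.
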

\begin{proof}
The first inequality is a consequence of the definition of $I_q$.
We choose $\varphi\equiv (q/p)^{\frac{1}{q-p}}$ in \eqref{eq:def-sol-equiv} to obtain
\[
\int_B\frac{|\nabla u|^p+u^p}{p}\, dx \le \left(\frac{q}{p}\right)^{\frac{p}{q-p}}\frac{|B|}{p} - \int_B u^{q-1}\left[\left(\frac{q}{p}\right)^{\frac{1}{q-p}}-u\right]\, dx
\]
and then use the first estimate in Lemma \ref{commonbound} to get the second inequality in the statement.
\end{proof}

\section{Asymptotics of $v_q$ as $q\to\infty$}\label{Sec:main}
In this section we shall prove Theorem \ref{thm:vqto1}.  
Let $p\in (1,2)$ and $v_q\in\mathcal C$ be a solution of  \eqref{eq:Pq} satisfying \eqref{eq:higher_energy}.
To start with, using the a priori bounds proved in the previous section, we show the existence of a limit profile $v_\infty$ of $(v_q)$ as $q\to\infty$, up to subsequences.

\begin{corollary}\label{cor:weakconv}
Let $q_n\to\infty$. There exist a subsequence, still denoted by $(q_n)$, and a function $v_\infty\in\mathcal C$ such that 
\begin{equation}\label{eq:convergences}
v_{q_n}\rightharpoonup v_\infty\;\mbox{ weakly in }W^{1,p}(B), \quad v_{q_n}\to v_\infty \;\mbox{ in }C^{0,\nu}(\bar B),
\end{equation}
as $n\to\infty$, for any $\nu\in(0,1)$.
\end{corollary}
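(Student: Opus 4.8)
The plan is to obtain the statement from the uniform a priori bounds of Lemma~\ref{commonbound} together with standard compactness arguments; the hypothesis \eqref{eq:higher_energy} plays no role here, only the fact that each $v_{q_n}\in\mathcal C$ solves \eqref{eq:Pq}. First I would record that, by Lemma~\ref{commonbound}, the sequence $(v_{q_n})$ is bounded in $W^{1,\infty}(B)$, equivalently equi-Lipschitz and uniformly bounded on $\bar B$: indeed $v_{q_n}(r)\le (q_n/p)^{1/(q_n-p)}$ and $v_{q_n}'(r)\le ((q_n-p)/(q_n(p-1)))^{1/p}$, and the right-hand sides stay bounded as $n\to\infty$ (they converge to $1$ and $(p-1)^{-1/p}$, respectively). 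In particular $(v_{q_n})$ is bounded in $W^{1,p}(B)$, so by reflexivity there are a subsequence (not relabelled) and $v_\infty\in W^{1,p}_{\mathrm{rad}}(B)$ with $v_{q_n}\rightharpoonup v_\infty$ weakly in $W^{1,p}(B)$; radiality passes to the weak limit since $W^{1,p}_{\mathrm{rad}}(B)$ is a closed subspace of $W^{1,p}(B)$.

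Next, since the embedding $W^{1,p}(B)\hookrightarrow L^p(B)$ is compact, $v_{q_n}\to v_\infty$ strongly in $L^p(B)$, hence, passing to a further subsequence, $v_{q_n}\to v_\infty$ almost everywhere in $B$. On the other hand, being bounded and equi-Lipschitz on $\bar B$, the sequence $(v_{q_n})$ admits, by the Ascoli--Arzel\`a theorem, a further subsequence converging uniformly on $\bar B$; by the a.e.\ convergence already obtained, the uniform limit coincides with $v_\infty$, so $v_{q_n}\to v_\infty$ in $C^0(\bar B)$. To upgrade this to $C^{0,\nu}$ for every $\nu\in(0,1)$, I would apply the elementary interpolation inequality for the H\"older seminorm,
\[
[w]_{C^{0,\nu}(\bar B)}\le \big(2\|w\|_{C^0(\bar B)}\big)^{1-\nu}\,[w]_{C^{0,1}(\bar B)}^{\nu},
\]
to $w=v_{q_n}-v_\infty$: the factor $[v_{q_n}-v_\infty]_{C^{0,1}(\bar B)}$ is bounded uniformly in $n$ (each $v_{q_n}$ is equi-Lipschitz and $v_\infty$, as their uniform limit, inherits the same Lipschitz constant), while $\|v_{q_n}-v_\infty\|_{C^0(\bar B)}\to 0$; hence $\|v_{q_n}-v_\infty\|_{C^{0,\nu}(\bar B)}\to 0$.

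Finally I would check that $v_\infty\in\mathcal C$. We have already seen $v_\infty\in W^{1,p}_{\mathrm{rad}}(B)$; non-negativity and the monotonicity condition $v_\infty(r_1)\le v_\infty(r_2)$ for $0<r_1\le r_2\le 1$ pass to the (uniform, in particular pointwise) limit from the corresponding properties of each $v_{q_n}\in\mathcal C$. This gives $v_\infty\in\mathcal C$ and concludes the argument.

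There is no substantial obstacle here: the statement is a soft compactness result. The only points that require a little care are that the two limits — the weak $W^{1,p}$ limit and the uniform limit — must be identified as the same function $v_\infty$, which is why one passes through the a.e.\ convergence furnished by Rellich's theorem; and that the relevant input is the uniform $C^1$-bound of Lemma~\ref{commonbound} (rather than the cruder $L^\infty$-bound of \cite[Lemma 2.2]{BFGM}), which is precisely what makes the Ascoli--Arzel\`a step and the H\"older interpolation work.
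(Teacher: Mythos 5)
Your proposal is correct and follows essentially the same route as the paper: the uniform $C^1$-bound from Lemma~\ref{commonbound} plus standard compactness, with the cone properties passing to the pointwise limit. The only cosmetic difference is that you prove the compactness of $C^1(\bar B)\hookrightarrow C^{0,\nu}(\bar B)$ by hand (Ascoli--Arzel\`a followed by H\"older interpolation), whereas the paper invokes that compact embedding directly.
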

\begin{proof} By Lemma \ref{commonbound}, and using that $(q/p)^{\frac{1}{q-p}}\to 1$ and $[(q-p)/(q(p-1))]^{\frac{1}{p}}\to (1/(p-1))^{\frac{1}{p}}$ as $q\to\infty$, $(v_{q_n})$ is bounded in the $C^1$-norm.
Using the compactness of the embedding $C^1\hookrightarrow C^{0,\nu}$, we infer the existence of a subsequence $(n_k)$ and a function $v_\infty$ for which $v_{q_{n_k}}\to v_\infty$ in $C^{0,\nu}(\bar B)$ for every $\nu\in (0,1)$. In particular, as $(v_{q_{n_k}})$ is bounded in $W^{1,p}(B)$, by possibly passing to a further subsequence, $v_{q_{n_k}}\rightharpoonup v_\infty$ weakly in $W^{1,p}(B)$. Furthermore, since $v_{q_{n_k}}\to v_\infty$ pointwise, $v_\infty$ is radial, non-negative and non-decreasing, i.e. $v_\infty\in\mathcal C$. 
\end{proof}

\begin{corollary}\label{weakconv}
Let $q_n\to\infty$ and let $v_\infty \in \mathcal C$ such that \eqref{eq:convergences} holds.
Then $v_\infty(1)=1$, and
\begin{equation}\label{eq:int-q-to0}
\lim_{n\to\infty}\int_B\frac{v_{q_n}^{q_n}}{q_n}\, dx =0.
\end{equation}  
\end{corollary}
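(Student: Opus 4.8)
The plan is to establish the two assertions separately, drawing on the $C^1$-bounds of Lemma~\ref{commonbound} and the strong $C^{0,\nu}$-convergence from Corollary~\ref{cor:weakconv}.

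For the limit $v_\infty(1)=1$, I would start from the integrated form of the equation already used in the proof of Lemma~\ref{commonbound}, namely $\int_B v_{q_n}^{p-1}(1-v_{q_n}^{q_n-p})\,dx=0$. Splitting the ball along the level set where $v_{q_n}=1$ (there is a unique such radius, since $v_{q_n}$ crosses $1$ exactly once), one sees that $v_{q_n}(0)<1<v_{q_n}(1)$, so $v_\infty(0)\le 1\le v_\infty(1)$. To pin down $v_\infty(1)=1$ exactly, I would argue that $v_\infty$ cannot exceed $1$ anywhere: if $v_\infty(\bar r)>1$ for some $\bar r<1$, then by uniform $C^0$-convergence $v_{q_n}\ge 1+\eta$ on $[\bar r,1]$ for large $n$, whence $\int_{\{|x|\ge \bar r\}} v_{q_n}^{q_n}\,dx\to\infty$; but this contradicts the uniform bound $\int_B v_{q_n}^{q_n}\,dx\le q_n/p\cdot$(something bounded)—more precisely, from Corollary~\ref{cor:improved-est}, $\int_B v_{q_n}^{q_n-1}(\,(q_n/p)^{1/(q_n-p)}-v_{q_n})\,dx$ is bounded above, and since $v_{q_n}\le (q_n/p)^{1/(q_n-p)}$ by Lemma~\ref{commonbound}, the integrand is nonnegative; on $[\bar r,1]$ with $v_{q_n}\ge 1+\eta$ this integrand blows up, a contradiction. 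Hence $v_\infty\le 1$ on $\bar B$, and combined with $v_\infty(1)\ge 1$ this forces $v_\infty(1)=1$.

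For \eqref{eq:int-q-to0}, I would again use the decomposition $v_\infty\le 1$ on $\bar B$ together with the strong convergence. Fix a small radius $r_0<1$; on $B_{r_0}$ we have $v_\infty\le v_\infty(r_0)=:a_0\le 1$. I would like $a_0<1$, which is clear since $v_\infty(0)\le v_{q_n}(0)$-limit $<1$ is not immediate from a single inequality, but monotonicity of $v_\infty$ plus $v_\infty(1)=1$ means: either $v_\infty\equiv 1$ on some $[r_0,1]$, in which case the inner contribution is the whole difficulty; or $v_\infty<1$ strictly on $[0,r_0]$. In the first case, on the remaining annulus $\{r_0\le |x|\le 1\}$ the integrand $v_{q_n}^{q_n}/q_n$ is dominated by $((q_n/p)^{1/(q_n-p)})^{q_n}/q_n = (q_n/p)^{q_n/(q_n-p)}/q_n \to 1/p\cdot$(bounded), which does NOT vanish—so here the real tool must be the boundedness of $\int_B v_{q_n}^{q_n-1}((q_n/p)^{1/(q_n-p)}-v_{q_n})\,dx$ from Corollary~\ref{cor:improved-est} again: writing $v_{q_n}^{q_n}/q_n = \frac{1}{q_n}v_{q_n}^{q_n-1}\cdot v_{q_n}$ and comparing with $v_{q_n}^{q_n-1}\cdot(q_n/p)^{1/(q_n-p)}$, the difference is exactly $\frac1{q_n}v_{q_n}^{q_n-1}((q_n/p)^{1/(q_n-p)}-v_{q_n})\to 0$ since that integral is bounded and $1/q_n\to0$; hence it suffices to show $\frac{1}{q_n}(q_n/p)^{1/(q_n-p)}\int_B v_{q_n}^{q_n-1}\,dx\to 0$. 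For this last integral I would split at a radius slightly below $1$: on $\{v_{q_n}\le 1-\delta\}$ the integrand is exponentially small, while on the set where $v_{q_n}$ is close to its maximum the measure is small because $v_{q_n}\to v_\infty$ uniformly and $v_\infty<1$ except on a set of measure zero (namely $\{|x|=1\}$). Quantifying: $\int_B v_{q_n}^{q_n-1}\,dx \le |\{v_{q_n}\le 1-\delta\}|(1-\delta)^{q_n-1} + |\{v_{q_n}>1-\delta\}|\cdot(q_n/p)^{(q_n-1)/(q_n-p)}$, and $|\{v_{q_n}>1-\delta\}|\to |\{v_\infty\ge 1-\delta\}|$, which tends to $0$ as $\delta\to0$ by monotone convergence since $\{v_\infty\ge 1\}$ has measure zero. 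A careful diagonal choice of $\delta=\delta_n\to0$ slowly then gives the claim.

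The main obstacle I anticipate is precisely controlling the contribution near $|x|=1$: the integrand $v_{q_n}^{q_n}/q_n$ does not decay pointwise there, so one genuinely needs the quantitative gain from Corollary~\ref{cor:improved-est} combined with the fact that the sublevel sets $\{v_\infty\ge 1-\delta\}$ shrink to a null set. Getting the two limiting procedures ($n\to\infty$ and $\delta\to0$) to cooperate—rather than, say, absorbing a non-vanishing boundary layer—is the delicate point; everything else is a routine application of the uniform bounds and the strong $C^{0,\nu}$-convergence.
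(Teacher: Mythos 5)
Your argument for $v_\infty(1)=1$ reaches the right conclusion, but the detour through a blow-up of $\int_B v_{q_n}^{q_n-1}\bigl((q_n/p)^{1/(q_n-p)}-v_{q_n}\bigr)\,dx$ is unnecessary and not really coherent: if $v_{q_n}\ge 1+\eta$ on a set, then for large $n$ this already contradicts the pointwise bound $v_{q_n}\le (q_n/p)^{1/(q_n-p)}\to 1$ of Lemma~\ref{commonbound}, so the integrand never gets the chance to ``blow up.'' The clean route (and the paper's) is a direct squeeze: $v_{q_n}$ is non-decreasing and $\not\equiv 1$ by \eqref{eq:higher_energy}, so the integrated equation forces $v_{q_n}(1)>1$, while Lemma~\ref{commonbound} gives $v_{q_n}(1)\le (q_n/p)^{1/(q_n-p)}\to 1$.

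The proof of \eqref{eq:int-q-to0} has a genuine gap. Your splitting argument hinges on the claim that $v_\infty<1$ except on a null set. At this stage nothing of the sort is known: all you have is $v_\infty\in\mathcal C$ with $v_\infty\le 1$ and $v_\infty(1)=1$, so $\{v_\infty=1\}$ is an annulus $\{\bar R\le |x|\le 1\}$ whose measure can be positive --- and indeed the theorem you are working towards shows $v_\infty\equiv 1$, i.e.\ $\{v_\infty\ge 1\}$ is the \emph{whole} ball. (You appear to be importing the profile of the low-energy solution $u_q\to G<1$.) Consequently $|\{v_{q_n}>1-\delta\}|$ does not shrink as $\delta\to 0$, and your second term $q_n^{-1}(q_n/p)^{(q_n-1)/(q_n-p)}|\{v_{q_n}>1-\delta\}|\approx |\{v_{q_n}>1-\delta\}|/p$ does not vanish. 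Nor can the bound on $\int_B v_{q_n}^{q_n-1}\bigl((q_n/p)^{1/(q_n-p)}-v_{q_n}\bigr)\,dx$ rescue this: the weight degenerates exactly where $v_{q_n}$ is close to its maximum, which is precisely the region you need to control. The fix is much simpler and is what the paper does: test the weak formulation of \eqref{eq:Pq} with $\varphi=v_q$ to get the identity $\int_B v_q^{q}\,dx=\|v_q\|_{W^{1,p}(B)}^p$, which is uniformly bounded in $q$ by Corollary~\ref{cor:improved-est} (even \eqref{eq:first_improvement} suffices); dividing by $q$ gives \eqref{eq:int-q-to0} at once. Alternatively, the integrated equation you already invoke, $\int_B v_q^{q-1}\,dx=\int_B v_q^{p-1}\,dx\le 2|B|$ for $q$ large, makes $\int_B v_{q_n}^{q_n-1}\,dx$ uniformly bounded and would close your own reduction without any splitting.
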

\begin{proof} We follow the lines of \cite[Lemma 4.4]{BFGM}, see also {\cite[Lemma 5.6]{BF}}. We integrate the equation satisfied by $v_{q}$ to get 
\[
\int_B v_q^{p-1}(1-v_q^{q-p}) dx =0.
\]
Since $v_q\not\equiv 1$ by \eqref{eq:higher_energy}, and is positive and non-decreasing, we deduce that \eqref{eq:u(0)} holds for $v_q$.
Hence, $\|v_q\|_{L^{\infty}(B)}=v_q(1)>1$. Consequently, using the $L^\infty$-estimate given in Lemma \ref{commonbound}, we get 
\[1\le \lim_{n\to\infty}v_{q_n}(1)= \lim_{n\to\infty}\|v_{q_n}\|_{L^\infty(B)}\le \lim_{n\to\infty} \left(\frac{q_n}{p}\right)^{\frac{1}{q_n-p}}= 1,\]
and so $v_\infty(1)=\lim_{n\to\infty}v_{q_n}(1)=1$.

Finally, using the equation satisfied by $v_q$ and Corollary \ref{cor:improved-est}, we have
\[
\int_B \frac{v_q^q}{q}\, dx = \frac{\|v_q\|_{W^{1,p}(B)}^p}{q}
\le \frac{|B|}{q}\left(\frac{q}{p}\right)^\frac{p}{q-p},
\]
which implies \eqref{eq:int-q-to0}.
\end{proof}

%

So far we have proved that every sequence $(v_{q_n})$ has, up to subsequences,  a limit profile $v_\infty$. 
Our goal is to show that any limit profile $v_\infty$ is identically equal to $1$, which will also imply that the whole family $v_q$ converges. 
To this aim, we first detect the limit of the energy levels $I_q(v_q)$.
In view of \eqref{eq:int-q-to0}, the expected energy functional in the limit is simply $\frac{1}{p}\|\cdot\|^p_{W^{1,p}(B)}$, where the term involving the $q$-power disappears. In this sense, the following proposition states that $I_q(v_q)$ converges to the energy of the constant function $1$. 

\begin{proposition}\label{prop:conv-en}
The energy of $v_q$ has the following limit
\begin{equation}\label{eq:Iqvq}
\lim_{q\to\infty} I_q(v_q) =\frac{|B|}{p}=\frac{\|1\|^p_{W^{1,p}(B)}}{p}.
\end{equation}
%
\end{proposition}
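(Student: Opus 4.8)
The plan is to prove the two inequalities $\limsup_{q\to\infty} I_q(v_q) \le |B|/p$ and $\liminf_{q\to\infty} I_q(v_q) \ge |B|/p$ separately. The upper bound is immediate from Corollary \ref{cor:improved-est}, since $I_q(v_q) \le \frac{|B|}{p}(q/p)^{p/(q-p)}$ and $(q/p)^{p/(q-p)} \to 1$ as $q\to\infty$. So the real content is the lower bound. Here the hypothesis \eqref{eq:higher_energy} enters crucially: rewritten, it says exactly
\[
I_q(v_q) > \frac{|B|}{p} - \frac{|B|}{q} + \int_B \frac{v_q^q}{q}\,dx - \int_B F_q(v_q)\,dx,
\]
wait --- more carefully, one must reconcile $I_q$ (defined with the truncated primitive $F_q$) with the expression in \eqref{eq:higher_energy} (written with the genuine power $|v_q|^q/q$). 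Since every solution $v_q\in\mathcal C$ satisfies the a priori bound $\|v_q\|_{L^\infty} \le (q/p)^{1/(q-p)}$ from Lemma \ref{commonbound}, and $s_0 > K_\infty \ge (q/p)^{1/(q-p)}$ for $q$ large, the truncation is not active on the range of $v_q$; hence $F_q(v_q) = v_q^q/q$ pointwise, and $I_q(v_q)$ equals precisely the left-hand side of \eqref{eq:higher_energy}. Therefore \eqref{eq:higher_energy} reads
\[
I_q(v_q) > \frac{|B|}{p} - \frac{|B|}{q}.
\]

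Combining this with the upper bound, I get
\[
\frac{|B|}{p} - \frac{|B|}{q} < I_q(v_q) \le \frac{|B|}{p}\left(\frac{q}{p}\right)^{\frac{p}{q-p}}.
\]
Both the left and right ends converge to $|B|/p$ as $q\to\infty$ (the left end trivially; the right end since $\frac{p}{q-p}\log\frac{q}{p} \to 0$), so by the squeeze theorem $I_q(v_q) \to |B|/p$. The final equality $|B|/p = \|1\|^p_{W^{1,p}(B)}/p$ is just the observation that $\|1\|^p_{W^{1,p}(B)} = \int_B(|\nabla 1|^p + 1^p)\,dx = |B|$.

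I expect essentially no obstacle here: the proposition is a short consequence of the a priori estimates (Corollary \ref{cor:improved-est}) already established and the defining property \eqref{eq:higher_energy} of $v_q$, plus the elementary limit $(q/p)^{p/(q-p)}\to 1$. The only point requiring a word of care is the identification $F_q(v_q) = v_q^q/q$ discussed above, i.e. checking that the a priori bound keeps $v_q$ below the truncation threshold $s_0$ so that the functional $I_q$ evaluated along $v_q$ coincides with the untruncated energy appearing in \eqref{eq:higher_energy}; this follows directly from Lemma \ref{commonbound} and the choice of $s_0$. Everything else is a two-sided squeeze.
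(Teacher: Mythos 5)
Your proof is correct and follows essentially the same route as the paper: the lower bound comes from $I_q(v_q)>I_q(1)=|B|(1/p-1/q)$ via \eqref{eq:higher_energy}, the upper bound from Corollary \ref{cor:improved-est}, and the squeeze gives the limit. Your extra remark that the a priori bound of Lemma \ref{commonbound} keeps $v_q$ below the truncation threshold $s_0$, so that $F_q(v_q)=v_q^q/q$ and $I_q(v_q)$ coincides with the left-hand side of \eqref{eq:higher_energy}, is a point the paper leaves implicit (it is built into the choice $s_0>K_\infty$), and making it explicit is a small but harmless addition.
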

\begin{proof}
By assumption \eqref{eq:higher_energy}, $I_q(v_q)>I_q(1)$ for every $q$.  Hence, we have
\begin{equation}\label{eq:liminf}
\liminf_{q\to\infty}I_q(v_q)\ge \lim_{q\to\infty}\left(\frac{1}{p}-\frac{1}{q}\right)|B|=\frac{|B|}{p}.
\end{equation}
In order to prove the reverse inequality for the supremum limit, we shall use in a crucial way the characterization of weak solutions of \eqref{eq:Pq} in the cone given in Lemma \ref{lem:conseq-def-equiv} and the consequent improved bound obtained in Corollary \ref{cor:improved-est}. Indeed, 
\[
\limsup_{q\to\infty} I_q(v_q) 
\leq \frac{|B|}{p}\lim_{q\to\infty}\left(\frac{q}{p}\right)^{\frac{p}{q-p}}
= \frac{|B|}{p},
\]
which, together with \eqref{eq:liminf}, proves the limit in \eqref{eq:Iqvq}.
%
\end{proof}

The convergence of the energy levels is not enough to conclude that $v_\infty\equiv 1$, the difficulty being that the convergence of $(v_q)$ to $v_\infty$, up to now, is not strong in the $W^{1,p}$-norm. We shall proceed by contradiction. To this aim, in the next lemma, we prove that if $v_\infty\not\equiv 1$, the convergence of $(v_q)$ to $v_\infty$ would be stronger in the region where $v_\infty<1$.  

\begin{lemma}\label{lem:conv-v'q}
Let $q_n\to\infty$ and let $v_\infty \in \mathcal C$ such that \eqref{eq:convergences} holds.
If there exists $R\in (0,1)$ such that $v_\infty<1$ in $[0,R]$, then $v'_{q_n}\to v'_\infty$ uniformly in $(0,R]$ as $n\to\infty$.
\end{lemma}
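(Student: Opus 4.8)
The key point is that on the region $[0,R]$ where $v_\infty<1$, the right-hand side $v_{q_n}^{q_n-1}$ of the equation becomes negligible, so the limit equation degenerates to a nice one and we can bootstrap the convergence. First I would fix $\rho\in(0,1)$ with $R<\rho<1$; by the uniform $C^{0,\nu}$-convergence of $v_{q_n}\to v_\infty$ and the hypothesis $v_\infty<1$ on $[0,R]$, for $n$ large we have $v_{q_n}\le v_{q_n}(R)\le\theta<1$ on $[0,R]$ for some constant $\theta$ independent of $n$. Hence $\|v_{q_n}^{q_n-1}\|_{L^\infty(B_R)}\le\theta^{q_n-1}\to 0$; more precisely the whole forcing term tends to $0$ in $C([0,R])$ (indeed exponentially fast).

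Next I would use the radial ODE. Writing the equation in radial form for $r\in(0,R]$, integration gives
\[
r^{N-1}\big((v_{q_n}')(r)\big)^{p-1}=\int_0^r s^{N-1}\big(v_{q_n}^{q_n-1}(s)-v_{q_n}^{p-1}(s)\big)\,ds,
\]
using $v_{q_n}'(0)=0$ (here I use that $t\mapsto t^{p-1}$ is increasing so $(v_{q_n}')^{p-1}$ determines $v_{q_n}'\ge 0$). Dividing by $r^{N-1}$ and taking into account that $v_{q_n}\to v_\infty$ uniformly and $v_{q_n}^{q_n-1}\to 0$ uniformly on $[0,R]$, the right-hand side converges uniformly on compact subsets of $(0,R]$; in fact, fixing any $r_0\in(0,R)$, on $[r_0,R]$ one has $r^{N-1}\ge r_0^{N-1}>0$, so
\[
\big(v_{q_n}'(r)\big)^{p-1}=\frac{1}{r^{N-1}}\int_0^r s^{N-1}\big(v_{q_n}^{q_n-1}(s)-v_{q_n}^{p-1}(s)\big)\,ds
\ \longrightarrow\ -\frac{1}{r^{N-1}}\int_0^r s^{N-1}v_\infty^{p-1}(s)\,ds
\]
uniformly on $[r_0,R]$. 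Taking $(\cdot)^{1/(p-1)}$, which is continuous (and locally Lipschitz away from $0$), and observing that the limit is nonpositive while $v_{q_n}'\ge0$ forces the limit to be $0$, I conclude $v_{q_n}'\to 0$ uniformly on $[r_0,R]$, and since $r_0$ was arbitrary, uniformly on every $[r_0,R]$. Combined with uniform convergence of $v_{q_n}$, this is exactly $v_{q_n}'\to v_\infty'$ uniformly on $(0,R]$, with $v_\infty'\equiv 0$ on $(0,R]$; equivalently $v_\infty$ is constant on $[0,R]$.

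Actually one must be a little careful near $r=0$, which I regard as the main technical obstacle: the factor $r^{-(N-1)}$ blows up, so the identity above only gives uniform control on $[r_0,R]$, not on $(0,R]$ up to the endpoint if $N\ge 2$. However, the statement only claims uniform convergence on $(0,R]$ — i.e., on sets bounded away from $0$ is not literally what is written, but the derivative $v_{q_n}'$ is continuous on $[0,R]$ with $v_{q_n}'(0)=0=v_\infty'(0)$, and near $0$ the estimate $r^{N-1}(v_{q_n}')^{p-1}=\int_0^r s^{N-1}(\cdots)\,ds$ gives $0\le (v_{q_n}'(r))^{p-1}\le \frac{1}{r^{N-1}}\int_0^r s^{N-1}v_{q_n}^{q_n-1}(s)\,ds\le \frac{\theta^{q_n-1}}{N}\,r$, hence $v_{q_n}'(r)\le (\theta^{q_n-1}r/N)^{1/(p-1)}\to 0$ uniformly on all of $[0,R]$ as well. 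So in fact the convergence $v_{q_n}'\to v_\infty'\equiv 0$ holds uniformly on the closed interval $[0,R]$, which in particular yields the claim on $(0,R]$. (If instead $N=1$ the factor $r^{N-1}=1$ and there is no singularity at all and the argument is immediate.) This completes the proof.
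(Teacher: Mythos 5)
Your overall strategy (integrate the radial ODE, use that $v_{q_n}^{q_n-1}\to 0$ uniformly on $[0,R]$ because $v_{q_n}\le\theta<1$ there for large $n$, divide by $r^{N-1}$, take $(p-1)$-th roots) is essentially the paper's, and it would work — but you have a sign error in the integrated equation that propagates into a false conclusion. The radial form of $-\Delta_p u+u^{p-1}=u^{q-1}$ is $\bigl(r^{N-1}(u')^{p-1}\bigr)'=r^{N-1}\bigl(u^{p-1}-u^{q-1}\bigr)$, so integration gives
\[
r^{N-1}\bigl(v_{q_n}'(r)\bigr)^{p-1}=\int_0^r s^{N-1}\bigl(v_{q_n}^{p-1}(s)-v_{q_n}^{q_n-1}(s)\bigr)\,ds ,
\]
with the opposite sign from what you wrote. (With your sign the identity is already absurd for finite $n$: on $[0,R]$ one has $v_{q_n}<1$ and $q_n-1>p-1$, so your right-hand side is $\le 0$ while the left-hand side is $\ge 0$.) Consequently the correct limit of $\bigl(v_{q_n}'(r)\bigr)^{p-1}$ is $r^{1-N}\int_0^r s^{N-1}v_\infty^{p-1}(s)\,ds\ge 0$, which is in general strictly positive — \emph{not} zero. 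Your conclusion that $v_{q_n}'\to 0$ and hence that $v_\infty$ is constant on $[0,R]$ is false and incompatible with the rest of the argument: in Step~1 of the main proof, $v_\infty$ restricted to $B_{\bar R}$ solves $-\Delta_p u+u^{p-1}=0$ with $u=1$ on $\partial B_{\bar R}$, which admits no constant solution (a constant $c$ would require $c^{p-1}=0$ and $c=1$ simultaneously). The same sign slip infects your discussion near $r=0$, where you drop the $v_{q_n}^{p-1}$ term to deduce a bound of order $\theta^{q_n-1}$; with the correct sign you only get $\bigl(v_{q_n}'(r)\bigr)^{p-1}\le \tfrac{r}{N}\sup v_{q_n}^{p-1}$, which is uniformly small for small $r$ but does not vanish as $n\to\infty$ for fixed $r$.

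Once the sign is fixed, the rest of your argument is fine and is in fact a slightly more explicit version of the paper's (which introduces $w_q=r^{N-1}(v_q')^{p-1}$ and invokes Arzel\`a--Ascoli instead of the closed-form integral): one obtains uniform convergence of $v_{q_n}'$ on each $[r_0,R]$ to $\Bigl(r^{1-N}\int_0^r s^{N-1}v_\infty^{p-1}\,ds\Bigr)^{1/(p-1)}$, identifies this limit with $v_\infty'$ via the theorem on differentiation of uniform limits, and handles the endpoint $r=0$ with the (corrected) bound above, which is small uniformly in $n$ for $r$ small. Please redo the computation with the correct sign and restate the limit of $v_{q_n}'$ accordingly.
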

\begin{proof}
Exploiting the radial symmetry of the problem and that of $v_q$, we can write the equation for $v_q$ in radial form, to get 
\[
-(r^{N-1}(v'_q)^{p-1})'=r^{N-1}(v_q^{q-1}-v_q^{p-1}).
\]
We introduce $w_q:=r^{N-1}(v'_q)^{p-1}$ and observe that in the interval $[0,R]$ the following estimates hold for $q$ large and for a suitable $C>0$ independent of $q$: 
\[
\begin{gathered}
w_q\le \left(\frac{q-p}{q(p-1)}\right)^{\frac{p-1}{p}}\le C\\
|w'_q|\le r^{N-1}(v_q^{q-1}+v_q^{p-1})\le v_q^{q-1}(R)+v_q^{p-1}(R),
\end{gathered}
\]
where in the first line we have used the estimate of $v'_q$ given in Lemma \ref{commonbound}, while in the second line we have used that $v_q$ is non-decreasing.
Now, along the sequence $(v_{q_n})$, using the convergence $v_{q_n}(R)\to v_\infty(R)< 1$, we infer
\[
|w'_{q_n}|\le 2.
\]
Therefore, by the Arzel\`a-Ascoli Theorem, $w_{q_n}\to w_\infty$ uniformly in $[0,R]$, for a suitable function $w_\infty$. 

Now, for every $\delta>0$, we have in $[\delta, R]$
\[
\left|(v'_{q_n})^{p-1}-\frac{w_\infty}{r^{N-1}}\right|=\left|\frac{r^{N-1}(v'_{q_n})^{p-1}- w_\infty}{r^{N-1}}\right|\le \frac{\|w_{q_n}-w_\infty\|_{L^\infty(0,R)}}{\delta^{N-1}}\to 0
\]
as $n\to\infty$, whence
\[
\left|v'_{q_n}-\left(\frac{w_\infty}{r^{N-1}}\right)^{\frac{1}{p-1}}\right|\le\frac{1}{p-1}\left|(v'_{q_n})^{2-p}+\left(\frac{w_\infty}{r^{N-1}}\right)^{\frac{2-p}{p-1}}\right|\left|(v'_{q_n})^{p-1}-\frac{w_\infty}{r^{N-1}}\right|\to 0,
\]
where we have used the following algebraic inequality 
\[
|a^\gamma-b^\gamma|\le \gamma(|a|^{\gamma-1}+|b|^{\gamma-1})|a-b|\quad\mbox{for all }a,\,b\in \mathbb R,\,\gamma\ge 1,
\]
with $\gamma=\frac{1}{p-1}$, $a=(v'_{q_n})^{p-1}$ and $b=\frac{w_\infty}{r^{N-1}}$.
Therefore, for every small $\delta>0$, $(v'_{q_n})$ converges uniformly to $\left(\frac{w_\infty}{r^{N-1}}\right)^{\frac{1}{p-1}}$ in $[\delta,R]$. Since also $v_{q_n}\to v_\infty$ uniformly in $[\delta,R]$, by application of the uniform convergence to differentiability (see for instance \cite{Rudin}), $v_\infty$ is differentiable and $\left(\frac{w_\infty}{r^{N-1}}\right)^{\frac{1}{p-1}}=v'_\infty$ in $[\delta,R]$. By the arbitrariness of $\delta>0$, we get that $v'_{q_n}\to v'_\infty$ uniformly in $(0,R]$.
\end{proof}

\begin{proof}[$\bullet$ Proof of Theorem \ref{thm:vqto1}]
By Corollary \ref{cor:weakconv}, given any $q_n\to\infty$ there exist a subsequence, still denoted by $(q_n)$, and a function $v_\infty\in\mathcal C$ such that $v_{q_n}\rightharpoonup v_\infty$  weakly in $W^{1,p}(B)$ and $v_{q_n}\to v_\infty$ in $C^{0,\nu}(\bar B)$ for any $\nu\in(0,1)$.
We shall now prove that any limit function $v_\infty$ is the constant 1 (hence also implying the convergence along the whole family $v_q$). To do so, we suppose by contradiction that $v_\infty\not\equiv 1$ and define
\[
\bar{R}:=\inf\{r\in[0,1]\,:\, v_\infty(r)=1\}.
\]
Since $v_\infty(1)=1$, $\bar{R}\le 1$, and by the contradiction assumption, $\bar{R}>0$. Furthermore, by the definition of $\bar{R}$ as infimum, $v_\infty<1$ in $[0,\bar{R})$, and using that $v_\infty$ is continuous, non-decreasing, and that $v_\infty(1)=1$, it holds that $v_\infty\equiv1$ in $[\bar{R},1]$.

We divide the proof into two main steps. 
\smallskip

{\sc Step 1.} In this first step we prove that $v_\infty|_{B_{\bar{R}}}$ solves the problem 
\begin{equation}\label{eq:limit-pb}
\begin{cases}
-\Delta_p u+|u|^{p-2}u=0\quad&\mbox{in }B_{\bar{R}},\\
u=1&\mbox{on }\partial B_{\bar{R}}
\end{cases}
\end{equation}
and that 
\begin{equation}\label{eq:conseq-step1}
\|v_\infty\|^p_{W^{1,p}(B)}<|B|.
\end{equation} 
To prove \eqref{eq:limit-pb}, we aim to show that the distributional identity 
\[
\int_{B_{\bar{R}}}|\nabla v_\infty|^{p-2}\nabla v_\infty\cdot\nabla \varphi\,dx+\int_{B_{\bar{R}}} v_\infty^{p-1} \varphi\,dx=0
\]
holds for every $\varphi\in C_c^\infty(B_{\bar{R}})$. We observe that, fixed $\varphi\in C_c^\infty(B_{\bar{R}})\subset C^\infty(\bar B)$, since $v_q$ solves \eqref{eq:Pq}, 
\begin{equation}\label{eq:distr-id-vq}
\int_{B_{\bar{R}}}|\nabla v_q|^{p-2}\nabla v_q\cdot\nabla \varphi\,dx+\int_{B_{\bar{R}}} v_q^{p-1} \varphi\,dx=\int_{B_{\bar{R}}} v_q^{q-1} \varphi\,dx.
\end{equation}
Now, using the estimate in Lemma \ref{commonbound}, for $q$ large, $|v_q^{p-1}\varphi|\le 2|\varphi|\in L^1(B_{\bar{R}})$, moreover $v_{q_n}^{p-1}\varphi\to v_\infty^{p-1}\varphi$ pointwise in $B_{\bar{R}}$. Thus, by the Dominated Convergence Theorem
\begin{equation}\label{eq:conv1}
\int_{B_{\bar{R}}}v_{q_n}^{p-1}\varphi\,dx \to \int_{B_{{\bar{R}}}}v_\infty^{p-1}\varphi\,dx \quad\text{as }n\to\infty.
\end{equation}
Let $R_\varphi<{\bar{R}}$ be such that $\mathrm{supp}\varphi\subseteq B_{R_\varphi}$. By the definition of ${\bar{R}}$, $v_\infty(R_\varphi)<1$ and so, in correspondence of $\varepsilon=\frac{1-v_\infty(R_\varphi)}{2}>0$, there exists $\bar n$ large such that 
\[
v_{q_n}(r)\le v_{q_n}(R_\varphi)\le v_\infty(R_\varphi)+\varepsilon=\frac{v_\infty(R_\varphi)+1}{2}<1\quad\mbox{for every }n\ge \bar{n}
\]
for every $r\le R_\varphi$.
This implies that, as $n\to\infty$,
\begin{equation}\label{eq:conv2}
\int_{B_{\bar{R}}}v_{q_n}^{{q_n}-1}\varphi\,dx= 
\int_{B_{R_\varphi}}v_{q_n}^{{q_n}-1}\varphi\,dx\le 
\left(\frac{v_\infty(R_\varphi)+1}{2}\right)^{q-1}\int_{B_{R_\varphi}}|\varphi|\,dx\to 0.
\end{equation}
It remains to study the convergence of the first integral in \eqref{eq:distr-id-vq} involving the gradients. We observe that such integral is actually over $B_{R_\varphi}$, therefore we can estimate 
\[
\begin{aligned}
&\left|\int_{B_{R_\varphi}}|\nabla v_q|^{p-2}\nabla v_q\cdot\nabla \varphi\,dx-\int_{B_{R_\varphi}}|\nabla v_\infty|^{p-2}\nabla v_\infty\cdot\nabla \varphi\,dx\right|\\
&\le \left|\int_{B_{R_\varphi}}(|\nabla v_q|^{p-2}-|\nabla v_\infty|^{p-2})\nabla v_q\cdot\nabla \varphi\,dx\right|\\
&\phantom{\le}+\left|\int_{B_{R_\varphi}}|\nabla v_\infty|^{p-2}\nabla v_q\cdot\nabla \varphi\,dx-\int_{B_{R_\varphi}}|\nabla v_\infty|^{p-2}\nabla v_\infty\cdot\nabla \varphi\,dx\right|\\
&\le \|(v'_q)^{p-2}-(v'_\infty)^{p-2}\|_{L^\infty(0,R_\varphi)}\left(\int_{B_{R_\varphi}}C|\nabla \varphi|\,dx\right)+|F(v_q)-F(v_\infty)|,
\end{aligned}
\]
where in the last line we have used the Cauchy-Schwartz inequality, the constant $C>0$ independent of $q$ comes from the estimate on $v'_q$ given in Lemma \ref{commonbound}, and we have introduced the functional $F\in (W^{1,p}(B))'$ defined as $F(u)= \int_{B_{R_\varphi}}|\nabla v_\infty|^{p-2}\nabla u\cdot\nabla \varphi\,dx$ for every $u\in W^{1,p}(B)$. 
Applying Lemma \ref{lem:conv-v'q} with $R=R_\varphi<{\bar{R}}$, we deduce that $\|(v'_{q_n})^{p-2}-(v'_\infty)^{p-2}\|_{L^\infty(0,R_\varphi)}\to 0$ as $n\to\infty$. Moreover, by the weak convergence of $(v_{q_n})$ in $W^{1,p}(B)$, we have $|F(v_q)-F(v_\infty)|\to 0$. Therefore, the previous chain of inequalities provides
\[
\int_{B_{{\bar{R}}}}|\nabla v_{q_n}|^{p-2}\nabla v_{q_n}\cdot\nabla \varphi\,dx\to \int_{B_{{\bar{R}}}}|\nabla v_\infty|^{p-2}\nabla v_\infty\cdot\nabla \varphi\,dx\quad \mbox{as }n\to \infty.
\]
Combining this limit with \eqref{eq:conv1} and \eqref{eq:conv2}, we get that $v_\infty|_{B_{\bar{R}}}$ solves \eqref{eq:limit-pb}. 

Now, we observe that, by \cite[Lemma 5.7]{BF} (see also \cite[Lemma 4.5]{BFGM}), the following infimum 
\[
\inf \left\{ \|u\|^p_{W^{1,p}(B_{\bar{R}})}\,:\, u\in \mathcal C,\, u\equiv 1\mbox{ on }\partial B_{\bar{R}} \right\}
\]
is uniquely achieved by the solution of \eqref{eq:limit-pb}, i.e. by $v_\infty|_{B_{\bar{R}}}$. Therefore we obtain that 
\[
\|v_\infty\|^p_{W^{1,p}(B)}=\|v_\infty\|^p_{W^{1,p}(B_{\bar{R}})}+\|1\|^p_{W^{1,p}(B\setminus B_{\bar{R}})}<\|1\|^p_{W^{1,p}(B_{\bar{R}})}+\|1\|^p_{W^{1,p}(B\setminus B_{\bar{R}})}=|B|.
\]
\smallskip

{\sc Step 2.} In this second step we show that $I_{q_n}(v_{q_n})\to \frac{1}{p}\|v_\infty\|^p_{W^{1,p}(B)}$, more explicitly
\[
\frac{1}{p}\int_B|\nabla v_{q_n}|^p\,dx+\frac{1}{p}\int_B v_{q_n}^p\,dx-\frac{1}{{q_n}}\int_B v_{q_n}^{q_n}\,dx\to \frac{1}{p}\int_B|\nabla v_\infty|^p\,dx+\frac{1}{p}\int_B v_\infty^p\,dx
\]
as $n\to \infty$.
We already know by \eqref{eq:int-q-to0} that $\frac1{q_n}\int_B v_{q_n}^{q_n}\,dx\to 0$. Moreover, since $W^{1,p}(B)$ is compactly embedded in $L^p(B)$ and $v_{q_n}\rightharpoonup v_\infty$ in $W^{1,p}(B)$, $\int_B v_{q_n}^p \,dx\to \int_B v_\infty^p \,dx$. It remains to study the convergence of the gradient term. Now, for every small $\delta>0$ we can write
\[
\int_B|\nabla v_{q_n}|^p\,dx=\begin{cases}
\int_{B_{{\bar{R}}-\delta}}\dots dx+\int_{B_{{\bar{R}}+\delta}\setminus B_{{\bar{R}}-\delta}}\dots dx+\int_{B\setminus B_{{\bar{R}}+\delta}}\dots dx\;&\mbox{if }{\bar{R}}<1,\\
\int_{B_{1-\delta}}\dots dx+\int_{B\setminus B_{1-\delta}}\dots dx &\mbox{if }{\bar{R}}=1.
\end{cases}
\]
In both cases ${\bar{R}}<1$ and ${\bar{R}}=1$, the first integral $\int_{B_{{\bar{R}}-\delta}}|\nabla v_{q_n}|^p\,dx$ can be treated in the same way:
\begin{equation}\label{eq:first-int}
\begin{aligned}
&\left|\int_{B_{{\bar{R}}-\delta}}|\nabla v_{q_n}|^p\,dx-\int_{B_{{\bar{R}}-\delta}}|\nabla v_\infty|^p\,dx\right|\le \int_{B_{{\bar{R}}-\delta}}\left||\nabla v_{q_n}|^p-|\nabla v_\infty|^p\right|\,dx\\
&\hspace{3cm}\le \|(v'_{q_n})^p-(v'_\infty)^p\|_{L^\infty(0,{\bar{R}}-\delta)}|B_{{\bar{R}}-\delta}|\to 0 \quad\mbox{as }n\to \infty,
\end{aligned}
\end{equation} 
where we have used Lemma \ref{lem:conv-v'q} with $R=\bar{R}-\delta$. Using the same argument it is possible to treat also the second integral in both cases $\int_{B_{{\bar{R}}+\delta}\setminus B_{{\bar{R}}-\delta}}|\nabla v_{q_n}|^p\, dx$ and $\int_{B\setminus B_{1-\delta}}|\nabla v_{q_n}|^p\, dx$. Namely, if ${\bar{R}}<1$
\begin{equation}\label{eq:second-int}
\int_{B_{{\bar{R}}+\delta}\setminus B_{{\bar{R}}-\delta}}|\nabla v_{q_n}|^p\, dx\le \frac{{q_n}-p}{{q_n}(p-1)}|B_{{\bar{R}}+\delta}\setminus B_{{\bar{R}}-\delta}|\le C|B_{{\bar{R}}+\delta}\setminus B_{{\bar{R}}-\delta}|, 
\end{equation}
where $C>0$ is a suitable constant not dependent on $n$ nor on $\delta$, and we have used the estimate for $v'_{q}$ in Lemma \ref{commonbound}. A similar estimate holds for the corresponding integral when ${\bar{R}}=1$. 
Finally, in the case ${\bar{R}}<1$, it remains to consider the third integral $\int_{B\setminus B_{{\bar{R}}+\delta}}|\nabla v_{q_n}|^p\, dx$. Since $v_\infty\equiv 1$ in $[{\bar{R}}+\delta,1]$, we need to prove that $\int_{B\setminus B_{{\bar{R}}+\delta}}|\nabla v_{q_n}|^p\, dx\to 0$ as $q\to \infty$. It suffices to prove that $v'_{q_n}\to 0$ in $[{\bar{R}}+\delta,1]$. 
We argue by contradiction and suppose that there exist $\bar{r}\in[{\bar{R}}+\delta,1)$, a subsequence $(q_{n_k})$, and a positive number $a>0$ such that $v'_{q_{n_k}}(\bar{r})\ge a$ for every $n\in\mathbb N$. Then, for every $r\in (\bar{r},1]$,
\begin{equation}\label{eq:fund-calc}
v_{q_{n_k}}(r)-v_{q_{n_k}}(\bar{r})=\int_{\bar{r}}^r v'_{q_{n_k}}(\tau) \,d\tau\ge a(r-\bar{r})>0. 
\end{equation}
On the other hand, $v_{q_n}\to 1$ uniformly in $[{\bar{R}}+\delta,1]$, and so the left-hand side of \eqref{eq:fund-calc} tends to zero as $q_{n_k}\to \infty$. This is a contradiction. 

By the arbitrariness of $\delta>0$, passing to the limit as $\delta\to 0$ in \eqref{eq:second-int}, we conclude the proof of Step 2. 
\smallskip

{\sc Conclusion.} Combining the result proved in Step 2. with \eqref{eq:Iqvq}, we get that 
\begin{equation}\label{eq:norma-v-infty}
\|v_\infty\|^p_{W^{1,p}(B)}=|B|. 
\end{equation}
This contradicts \eqref{eq:conseq-step1}, thus proving that $v_\infty\equiv 1$. Moreover, in view of Corollary \ref{cor:weakconv}, $v_{q_n}$ converges to $1$ in $C^{0,\nu}(\bar B)$ and weakly in $W^{1,p}(B)$. By \eqref{eq:int-q-to0}, Proposition \ref{prop:conv-en} and relation \eqref{eq:norma-v-infty}
\[
\lim_{n\to\infty}\frac{\|v_{q_n}\|^p_{W^{1,p}(B)}}{p}=\lim_{n\to\infty}I_{q_n}(v_{q_n})=\frac{|B|}{p}=\frac{\|v_\infty\|^p_{W^{1,p}(B)}}{p}.
\]
Therefore, $\|v_{q_n}\|_{W^{1,p}(B)}\to \|v_\infty\|_{W^{1,p}(B)}$, which together with the weak convergence implies that $v_{q_n}\to v_\infty\equiv 1$ in $W^{1,p}(B)$. By the arbitrariness of the sequence, the convergence of the whole family follows.
\end{proof}

\section*{Acknowledgments}
The authors were partially supported by the INdAM-GNAMPA Project 2022 ``Studi asintotici in problemi parabolici ed ellittici ''.

The authors wish to thank the anonymous referees for their useful suggestions.

\noindent

\bibliographystyle{abbrv}

\end{document}